\documentclass[12pt]{amsart}

\usepackage[english]{babel}

\usepackage{fullpage}

\usepackage{amsfonts,color,newclude}
\usepackage{amsthm}
\usepackage{amssymb,transparent}
\usepackage{amsmath}
\usepackage{amscd}
\usepackage{thmtools}
\usepackage{graphicx}
\usepackage{tikz}
\usepackage{comment}

\theoremstyle{definition}
\newtheorem{theorem}{Theorem}
\newtheorem{defn}[theorem]{Definition}
\newtheorem{cor}[theorem]{Corollary}
\newtheorem{question}[theorem]{Question}

\newtheorem{prop}[theorem]{Proposition}

\newcommand{\VH}{\mathcal{VH}}

\title{Cleanliness versus Specialness}
\author{Kasia Jankiewicz}
\address{Department of Mathematics\\
			University of California\\
			Santa Cruz, USA}
	\email{kasia@ucsc.edu}

\begin{document}
\maketitle

\begin{abstract}
We show that the fundamental group of a geometrically clean graph of finite rank free groups does not need to be virtually compact special, answering a question of Wise. This implies that the class of the virtually $\VH$-clean graphs of finite rank free groups is a proper subclass of the class of virtually geometrically clean graphs of finite rank free groups.
\end{abstract}

\section{Introduction and background}
We assume the reader's familiarity with the notions of graphs of spaces and graphs of groups, and refer to \cite{ScottWall} for the background.

Let $\Gamma$ be a directed graph, possibly with loops and multiple edges. Let $X(\Gamma)$ be a \emph{graph of graphs} (i.e.\ a graph of spaces whose vertex and edge spaces are graphs), with vertex graph $X_v$ for $v\in V(\Gamma)$, edge graphs $X_e$ for $e\in E(\Gamma)$, and edge maps $f^{\iota}_e: X_e\to X_{\iota(e)}$ and $f^\tau_e: X_e \to X_{\tau(e)}$, where $\iota(e),\tau(e)$ denote the initial and terminal vertex of the edge $e$.

Let $G(\Gamma)$ be the associated graph of groups, i.e.\ the vertex groups are $G_v = \pi_1(X_v)$ for $v\in V(\Gamma)$, the edge groups are $G_e = \pi_1(X_e)$ for $e\in E(\Gamma)$, and the edge maps are $\phi^{\iota}_e = (f^{\iota}_e)_*: G_e\to G_{\iota(e)}$ and $\phi^{\tau}_e = (f^\tau_e)_*: G_e \to G_{\tau(e)}$. 
If all $X_v, X_e$ are finite graphs, then all $G_v, G_e$ are finite rank free groups. In such a case, we say that $G(\Gamma)$ is a \emph{graph of finite rank free groups}.

Recall that a map $f:X\to Y$ of graphs is \emph{combinatorial} if it maps vertices to vertices, and each open edge is mapped homeomorphically onto an open edge.
\begin{defn}
   A graph of graphs $X(\Gamma)$ is 
   \begin{itemize}
   \item \emph{$\VH$-clean} if each map $f^{\iota}_e$ and $f^{\tau}_e$ is a combinatorial embedding,
   \item\emph{geometrically clean} if each map $f^{\iota}_e$ and $f^{\tau}_e$ is an embedding,
   \item\emph{algebraically clean} if each homomorphism $\phi^{\iota}_e$ and $\phi^{\tau}_e$ is an injection onto a free factor.
   \end{itemize}
   Similarly, we say that a graph of free groups $G(\Gamma)$ is clean in one of the senses above, if it is associated with a graph of graphs with the same property. As usual, we say $G(\Gamma)$ is \emph{virtually} clean in one of the senses above, if its fundamental group has a finite index subgroup that splits as a graph of graphs with that property.
\end{defn}
Clearly, every $\VH$-clean graph of groups is geometrically clean, and every geometrically clean graph of groups is algebraically clean. One can easily construct examples of graphs of groups which are geometrically clean but not $\VH$-clean, or algebraically clean but not geometrically clean. However, a more interesting question arises when we consider all possible splittings of a given group as a graph of finite rank free groups, and allow to pass to finite index subgroups.

\begin{question}\label{question}
Are the classes of groups that virtually splits as
\begin{itemize}
\item $\VH$-clean graphs of finite rank free groups,
\item geometrically clean graphs of finite rank free groups,
\item algebraically clean graphs of finite rank free groups
\end{itemize}
distinct?
\end{question}

The total space of a $\VH$-clean graph of graphs can be realized asa clean \emph{$\VH$ square complexes}, as first introduced by Bridson-Wise \cite{BridsonWise} ($\VH$ stands for \emph{vertical} and \emph{horizontal}). Wise further studied and utilized the above notions of the cleanliness in \cite{WiseFigure8, WisePolygons}. In particular, in \cite{WisePolygons} he gave the definitions of algebraically clean (referred to as ``clean'') and geometrically clean graphs of groups, and showed that the fundamental groups of algebraically clean graphs of finite rank free groups are residually finite.

Next, Haglund-Wise showed that $\VH$-cleanliness implies virtual specialness.
\begin{theorem}[{\cite[Thm 5.7]{HaglundWiseSpecial}}]
    Every group which splits as a $\VH$-clean graph of finite rank free groups is virtually compact special.
\end{theorem}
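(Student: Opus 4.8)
The plan is to realize the total space of a $\VH$-clean graph of graphs as a nonpositively curved square complex, and then to verify that, after passing to a finite-index subgroup, it satisfies the Haglund--Wise combinatorial criteria for specialness. First I would construct the square complex. Given the $\VH$-clean graph of graphs $X(\Gamma)$, form the total space
$$X = \Big( \bigsqcup_{v} X_v \ \sqcup\ \bigsqcup_{e} X_e\times[0,1]\Big)\big/\!\sim,$$
where $(x,0)\sim f^\iota_e(x)$ and $(x,1)\sim f^\tau_e(x)$. Each edge $a$ of $X_e$ contributes a square $a\times[0,1]$, so $X$ is a square complex whose edges are two-colored into horizontal edges (those of the $X_v$ and $X_e$) and vertical edges (the interval directions $\{p\}\times[0,1]$). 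The combinatorial-embedding hypothesis guarantees the attaching maps are combinatorial, so the links of vertices are honest bipartite graphs without bigons; since a bipartite graph contains no cycle of length less than four, $X$ satisfies Gromov's link condition and is nonpositively curved, with $\pi_1 X = G$ and $\widetilde X$ CAT(0). Compactness of the eventual special cover is automatic, since all $X_v,X_e$ are finite.

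Next I would invoke the Haglund--Wise characterization: $X$ is special precisely when no hyperplane is one-sided, no hyperplane self-intersects or self-osculates, and no two hyperplanes inter-osculate. The task thus reduces to removing these four pathologies in a finite cover, and here the cleanliness pays off directly for two of them. The hyperplanes fall into two families: those dual to vertical edges, which are copies of the edge graphs $X_e$, and those dual to horizontal edges, associated to edges of the vertex graphs. Because each $f^\iota_e,f^\tau_e$ is an \emph{embedding}, the edge spaces embed, which prevents the corresponding hyperplanes from self-intersecting; and after passing to a suitable double cover, every hyperplane is made two-sided. Such finite covers exist because $G$ is residually finite, which holds since $\VH$-clean implies algebraically clean and the fundamental groups of algebraically clean graphs of finite rank free groups are residually finite.

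The remaining and hardest task is to eliminate self-osculation and inter-osculation. The strategy is to prove the relevant hyperplane-carrier subgroups are separable in $G$ and then apply Haglund--Wise's canonical completion and retraction (or a Stallings-type covering argument) to ``pull apart'' osculating hyperplanes in a finite cover. The key structural input is that each carrier subgroup is a finitely generated subgroup of a graph of free groups arising from the $\VH$-clean structure, and the bipartite, product-square geometry of $X$ is exactly what forces these carriers to be quasiconvex and their cosets to be separated in a finite cover. The principal obstacle is as much organizational as technical: each defect must be removed by a finite cover, and one must ensure that correcting one pathology (say inter-osculation) does not reintroduce another, so that the separate corrections assemble into a single finite-index subgroup whose associated cover is compact special. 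Establishing the requisite separability of the carrier subgroups, and verifying that the corrections can be carried out simultaneously rather than merely one at a time, is where the real work of the proof lies.
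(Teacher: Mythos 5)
First, a point of orientation: the paper does not prove this statement at all --- it is quoted as background from \cite[Thm 5.7]{HaglundWiseSpecial} --- so there is no in-paper argument to compare against, and your proposal has to be measured against the original Haglund--Wise proof, whose overall strategy you are reconstructing. Your first paragraph is correct and standard: the total space is a nonpositively curved $\VH$ square complex because the combinatorial-embedding hypothesis forces the vertex links to be simple bipartite graphs, hence of girth at least $4$. Two of your attributions are off, though. In \emph{any} $\VH$ square complex no hyperplane self-intersects, for the trivial reason that the two midcubes of a square $a\times[0,1]$ lie in hyperplanes of different types (one vertical, one horizontal), so no single hyperplane contains both; the embedding hypothesis is not what prevents this. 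What the embedding hypothesis actually buys is that the horizontal hyperplanes (the copies of $X_e\times\{1/2\}$) do not self-osculate; and those hyperplanes are two-sided automatically from the $[0,1]$ coorientation, with no double cover needed. The delicate hyperplanes are the vertical ones, dual to the edges of the $X_v$ and $X_e$.

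The genuine gap is the one you flag yourself: removing self-osculation of the vertical hyperplanes and inter-osculation in a finite cover \emph{is} the theorem, and ``the carriers are quasiconvex and their cosets are separated in a finite cover'' is an assertion, not an argument. Residual finiteness of $G$ (which you correctly get from algebraic cleanliness) separates points, not finitely generated wall stabilizers, and subgroup separability can fail badly in graphs of free groups, so something specific to the clean $\VH$ structure must be used. In Haglund--Wise this missing piece is supplied by the canonical completion and retraction: because the edge maps are combinatorial embeddings of finite graphs, the Stallings-type completion can be performed compatibly in the vertex and edge graphs, yielding finite covers in which a given wall carrier lifts to a retract; from this one deduces separability of the hyperplane stabilizers and separability of the double cosets of pairs of crossing hyperplane stabilizers, and then applies their general criterion that a compact nonpositively curved cube complex with these two separability properties is virtually special. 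That criterion is also what resolves your ``simultaneity'' worry: there are only finitely many osculation defects, each killed in some finite cover, and one passes to a common finite cover (intersection of the corresponding finite-index subgroups), in which no new defects appear because defects only project forward, never backward. Until the separability statements are actually established from the clean structure, your proposal is a plan rather than a proof.
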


More recently, the author and Schreve showed that the fundamental groups of  algebraically clean graphs of finite rank free groups are also cohomologically good, and for each prime $p$ virtually residually $p$-finite \cite{JankiewiczSchreveProfiniteAlgebraicallyClean}. The author has also showed that many Artin groups virtually split as algebraically clean graphs of finite rank free groups \cite{JankiewiczArtinRf, JankiewiczArtinSplittings}, however many of them are not virtually compact special \cite{HJP, HaettelArtin}. Thus the class of groups which virtually split as $\VH$-clean graphs of finite rank free groups is properly contained in the class of groups which virtually split as algebraically clean graphs of finite rank free groups, which partially answer Question~\ref{question}.

Wise asked\footnote{Private communication.} whether every geometrically clean graph of finite rank free groups is virtually compact special. We answer this question negatively. 

\begin{theorem}\label{thm: existence}
    There exists a geometrically clean graph of finite graphs whose fundamental group is not virtually compact special.
\end{theorem}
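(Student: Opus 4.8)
The plan is to exhibit a single explicit graph of finite graphs $X(\Gamma)$ whose edge maps are topological embeddings but cannot be made combinatorial by any finite subdivision, and whose fundamental group $G$ can be certified to be not virtually compact special by an obstruction that survives passage to finite-index subgroups.

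First I would set up the building blocks. Take $\Gamma$ to be a single vertex with one loop edge, so that $G$ is an HNN extension $G = \langle F, t \mid t\,\phi^{\iota}_e(h)\,t^{-1} = \phi^{\tau}_e(h),\ h \in H\rangle$ of the free group $F = \pi_1(X_v)$ over the free edge group $H = \pi_1(X_e)$. I would choose the two edge maps $f^{\iota}_e, f^{\tau}_e : X_e \to X_v$ to be genuine embeddings of finite graphs, which guarantees geometric cleanliness and hence, via the observation that a connected subgraph inclusion induces a free-factor inclusion on $\pi_1$, also algebraic cleanliness and residual finiteness. The point is to arrange the two images inside $X_v$ so that the holonomy they induce has an infinite orbit: the breakpoints needed to combinatorialize $f^{\iota}_e$ force new breakpoints under $f^{\tau}_e$, which in turn force new breakpoints under $f^{\iota}_e$, and this cascade never terminates, so no finite subdivision of $X_v$ and $X_e$ makes both maps combinatorial. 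This is precisely the feature separating geometric from $\VH$ cleanliness, and it must be robust enough to recur in every finite cover of $X(\Gamma)$.

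The second, and main, step is to certify that $G$ is not virtually compact special. By the Haglund--Wise theorem quoted above, a virtually compact special $G$ would in particular be virtually cocompactly cubulated with a special quotient, and the cubulation would impose a combinatorial ``rational'' structure on the wall pattern of the total space. From the non-terminating holonomy I would extract a finite-index-stable obstruction, for instance a pair of inter-osculating wall families whose inter-osculation cannot be undone in any finite cover, equivalently a double coset of the edge subgroups that fails to be separable in $G$. Lifting to any finite-index subgroup $G_0$ and passing to the corresponding finite cover of $X(\Gamma)$ reproduces the same non-combinatorializable holonomy, so the obstruction persists and contradicts the structure forced by compact specialness of $G_0$.

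The hard part is this second step: turning the purely combinatorial failure of $\VH$-cleanness into a genuine obstruction to specialness rather than merely to $\VH$-cleanness. Residual finiteness, and even the failure of $\VH$-cleanness in every finite cover, are by themselves too weak, since special groups embed in right-angled Artin groups and these already contain non-separable subgroups. The crux is therefore to pin down exactly which separability (a specific edge-subgroup double coset, producing an unremovable hyperplane pathology) is simultaneously \emph{forced to fail} by the irrational gluing and \emph{genuinely incompatible} with compact specialness, and to verify that its failure is inherited by all finite-index subgroups. Once this finite-index-invariant obstruction is established, the theorem follows, and combined with Haglund--Wise it shows that geometric cleanliness is strictly weaker than virtual $\VH$-cleanness.
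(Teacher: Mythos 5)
Your proposal correctly identifies the shape of the problem---you need an explicit geometrically clean graph of graphs together with a certification that its fundamental group is not virtually compact special---but the second step, which you yourself flag as ``the hard part,'' is never actually carried out, and the strategy you sketch for it has a genuine conceptual obstacle. You propose to upgrade the failure of $\VH$-cleanness (the non-terminating cascade of breakpoints under the holonomy) to a failure of virtual compact specialness by extracting a non-separable double coset of the edge subgroups, or an unremovable inter-osculation, that persists in all finite covers. The difficulty is that virtual compact specialness is a property of the group, not of the chosen splitting: the Haglund--Wise theorem gives only one implication ($\VH$-clean $\Rightarrow$ virtually compact special), so showing that no finite cover of your particular $X(\Gamma)$ can be made combinatorial says nothing about whether $G$ admits some entirely different cocompact special cubulation. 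Likewise, the double cosets whose separability is governed by compact specialness are those of hyperplane stabilizers of an actual cubulation, and there is no a priori reason these should coincide with, or even be controlled by, the edge subgroups of your graph-of-groups decomposition. Your proposal does not supply the argument that bridges this gap, and it is not clear one exists along these lines.

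The paper avoids this issue entirely by a different and much more concrete route: it builds a specific geometrically clean graph of graphs $X(\Theta)$ over a theta graph (vertex groups $F_4$, edge groups $F_3$) and then shows by an explicit two-stage covering-space computation that its fundamental group embeds with index $6$ in the Artin group $A_{2,3,\infty}$. Non-virtual-cocompact-cubulation (hence non-virtual-compact-specialness) of $A_{2,3,\infty}$ is a known theorem cited from the literature, and the property passes to finite-index subgroups for free. In other words, the paper outsources the genuinely hard obstruction to an existing result about Artin groups rather than manufacturing a new separability obstruction from the irrational gluing pattern. If you want to complete your approach, you would either need to prove such an obstruction theorem yourself---which would be a substantial new result---or, as the paper does, engineer your example to be commensurable with a group already known not to be virtually compact special.
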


An example of a group satisfying Theorem~\ref{thm: existence} is the group $G$ which is the fundamental group of the graph of graphs $G(\Theta)$ constructed in Section~\ref{sec: the graph of groups} and illustrated in Figure~\ref{fig: graph of groups}. Algebraically, this group is a free product of two copies of the free group $F_4$ of rank four, amalgamated three times, each time along a copy of $F_3$. 
\begin{figure}
    \centering
    \includegraphics{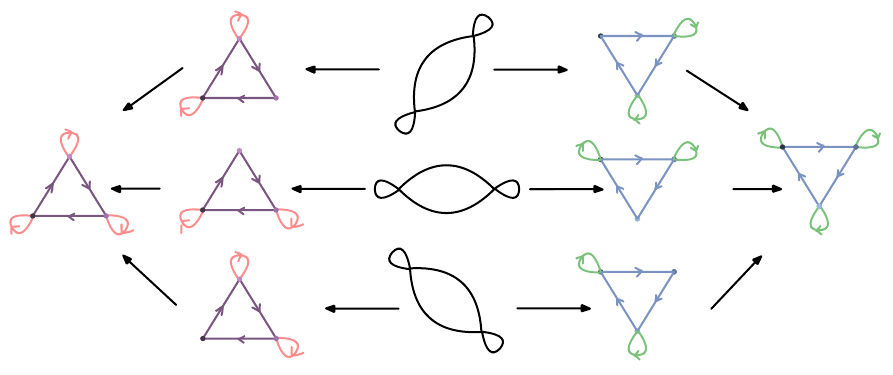}
    \caption{A geometrically clean graph of graphs $X(\Theta)$, whose fundamental group $G(\Theta)$ is not virtually compact special.}
    \label{fig: graph of groups}
\end{figure}
In Section~\ref{sec: the graph of groups}, we also show that this group embeds as a finite index subgroup in a certain Artin group, see Proposition~\ref{prop: finite index subgroup}. This embedding is similar to the proof that many Artin groups are virtually algebraically clean given in \cite{JankiewiczArtinRf, JankiewiczArtinSplittings}. However, there we never explicitly construct algebraically clean graphs of finite rank free groups, only argue that such covers must exist. For the groups considered in \cite{JankiewiczArtinRf, JankiewiczArtinSplittings}, the algebraically clean graphs fail to be geometrically clean.

As a corollary we obtain a further partial answer to Question~\ref{question}.
\begin{cor}\label{cor: properly contained}
    The class of groups which virtually split as $\VH$-clean graphs of finite rank free groups is properly contained in the class of groups which virtually split as geometrically clean graphs of finite rank free groups.
\end{cor}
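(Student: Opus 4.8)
The plan is to derive Corollary~\ref{cor: properly contained} almost immediately from Theorem~\ref{thm: existence} together with the Haglund--Wise theorem quoted in the excerpt.

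First I would establish the inclusion of classes. By definition, every group which virtually splits as a $\VH$-clean graph of finite rank free groups certainly virtually splits as a geometrically clean one, since every $\VH$-clean graph of graphs is geometrically clean (each combinatorial embedding is in particular an embedding), and the ``virtually'' quantifier is the same in both cases. Hence the class of groups that virtually split as $\VH$-clean graphs of finite rank free groups is contained in the class of groups that virtually split as geometrically clean graphs of finite rank free groups.

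Second, I would argue the containment is proper by exhibiting a group in the second class but not the first. The natural candidate is the group $G = G(\Theta)$ produced by Theorem~\ref{thm: existence}: it is the fundamental group of a geometrically clean graph of finite graphs, so it lies in the second class. To see it is \emph{not} in the first class, I would invoke the contrapositive of the Haglund--Wise theorem: every group which virtually splits as a $\VH$-clean graph of finite rank free groups is virtually compact special, whereas $G$ is \emph{not} virtually compact special by the conclusion of Theorem~\ref{thm: existence}. Therefore $G$ cannot virtually split as a $\VH$-clean graph of finite rank free groups, witnessing that the inclusion is strict.

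The main obstacle is not in this corollary at all but is entirely absorbed into Theorem~\ref{thm: existence}, whose proof must exhibit a concrete geometrically clean graph of graphs $X(\Theta)$ and then certify that its fundamental group fails to be virtually compact special. Once that theorem is in hand, the corollary is a formal two-line deduction combining the definitional inclusion with the obstruction coming from virtual compact specialness. I would take care only to note that both classes are defined using the same finite-index-subgroup convention, so that no subtlety about commensurability intervenes; the key point is that virtual compact specialness is the invariant separating the two classes on the specific example $G$.
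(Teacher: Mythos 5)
Your proposal is correct and matches the paper's reasoning exactly: the corollary is deduced from Theorem~\ref{thm: existence} together with the Haglund--Wise theorem, with the inclusion of classes being immediate from the definitions. The paper treats this as a formal consequence in just the way you describe, with all the substantive work residing in the construction of $G(\Theta)$ and the verification that it is not virtually compact special.
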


The following remains open. 
\begin{question}
    Does there exist a group that splits as an algebraically clean graph of finite rank free groups, and that does not virtually split as a geometrically clean graph of finite rank free groups?
\end{question}

\section{The construction of the graph of groups $G(\Theta)$}\label{sec: the graph of groups}

Let $\Theta$ be the theta-graph, i.e.\ a graph with two vertices $v, w$ and three edges $e_1, e_2, e_3$, each with initial vertex $v$ and the terminal vertex $w$.
Let $X(\Theta)$ be a graph of graphs illustrated in Figure~\ref{fig: graph of groups}. I.e.\ each vertex graph $X_v, X_w$ is a triangle (length $3$ cycle) with a loop attached to each vertex, and each edge group $X_{e_i}$ is a bigon (length $2$ cycle) with a loop attached to each vertex. The edge maps $f^{\iota}_{e_i}$ and $f^{\tau}_{e_i}$ are (non-combinatorial) embeddings, as illustrated in Figure~\ref{fig: graph of groups}. Let $G(\Theta)$ be the associated graph of groups, i.e. $G(\Theta)$ is the following graph of groups 
\begin{center}
    \begin{tikzpicture}
    \node at (-1.2,0) {$G(\Theta)=$};
    \node at (0,0) (v) {$F_4$};
    \node at (2,0) (w) {$F_4$};
    \draw (v) to[out = 90, in = 90] (w);
    \draw (v) to (w);
    \draw (v) to[out = -90, in = -90] (w);
    \node at (1,0.25) {$F_3$};
    \node at (1,1.15) {$F_3$};
    \node at (1,-0.65) {$F_3$};
    \end{tikzpicture}
\end{center}
where the inclusions of the copies of $F_3$ in $F_4$ are given by the inclusions of graphs in Figure~\ref{fig: graph of groups}. Let $G$ be the fundamental group of $G(\Theta)$.

The following is immediate.
\begin{prop}
        The graph of graphs $X(\Theta)$ (and consequently also the graph of groups $G(\Theta)$) is geometrically clean.
\end{prop}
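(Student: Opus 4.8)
The plan is to verify the proposition directly from the definition together with Figure~\ref{fig: graph of groups}, since geometric cleanliness is a purely topological condition on the six edge maps. Recall that $X(\Theta)$ is geometrically clean exactly when each $f^{\iota}_{e_i}$ and $f^{\tau}_{e_i}$, for $i\in\{1,2,3\}$, is an embedding, i.e.\ a continuous injection that is a homeomorphism onto its image. The key observation is that, unlike in the $\VH$-clean case, these maps are \emph{not} required to be combinatorial, so they are permitted to send a single edge of the bigon $X_{e_i}$ onto an edge-path in the triangle $X_v$ (resp.\ $X_w$); this is precisely the behavior drawn in the figure, and it is what forces the examples to be non-$\VH$-clean while remaining geometrically clean.

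First I would reduce the problem to checking injectivity: all the graphs in question are finite, hence compact Hausdorff, so any continuous injection between them is automatically a homeomorphism onto its image. Next I would examine each edge map in turn. Each graph $X_{e_i}$ is a $2$-cycle (the bigon) with one loop attached at each of its two vertices. Reading off the figure, the bigon's $2$-cycle is carried injectively onto the triangle of the target, with one of its two edges mapping to a single side of the triangle and the other edge mapping to the remaining two sides (passing through the third vertex); meanwhile the two loops of $X_{e_i}$ are each sent homeomorphically onto a distinct one of the loops attached to the triangle's vertices. Since the two bigon vertices land on distinct triangle vertices, the two loops land on distinct loops, and the image of the interior of the bigon is disjoint from these loops, no two distinct points of $X_{e_i}$ are identified. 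Hence each map is injective, and therefore an embedding.

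Since all six edge maps are embeddings, $X(\Theta)$ is geometrically clean by definition; consequently $G(\Theta)$ is geometrically clean as well, because by definition a graph of finite rank free groups is geometrically clean whenever it is associated with a geometrically clean graph of graphs, and $X(\Theta)$ is such. The only point demanding genuine attention — and the sole potential obstacle — is confirming injectivity of the non-combinatorial maps: because an edge of the bigon unfolds along several edges of the triangle, one must ensure that no two distinct arcs of $X_{e_i}$ are folded onto a common arc of the target. This is the single feature to be checked against the figure, and it holds visibly for the depicted embeddings, which is why the statement is immediate.
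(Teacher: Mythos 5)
Your argument is correct and matches the paper's approach: the paper simply declares the proposition ``immediate,'' meaning exactly the direct verification from Figure~\ref{fig: graph of groups} that each of the six edge maps is an injective (hence, by compactness, embedding) non-combinatorial map of graphs, which is what you carry out in detail.
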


Let $A_{2,3,\infty}$ be the Artin group with defining graph a path of length two with labels $2$ and $3$, i.e.
\[
A_{2,3,\infty} = \langle a,b,c \mid ab=ba, bcb=cbc\rangle.
\]

\begin{prop}\label{prop: finite index subgroup}
    The group $G$ embeds as an index $6$ subgroup in the Artin group $A_{2,3,\infty}$.
\end{prop}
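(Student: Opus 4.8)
The plan is to realize the inclusion $G\hookrightarrow A_{2,3,\infty}$ as the $\pi_1$ of a degree-$6$ cover. Since $G=\pi_1(X(\Theta))$ is given explicitly in Section~\ref{sec: the graph of groups}, and $A_{2,3,\infty}$ admits the aspherical presentation complex $Y$ (one vertex, loops $a,b,c$, a square $2$-cell for $[a,b]$ and a hexagonal $2$-cell for $bcb=cbc$; asphericity follows from the cut-vertex decomposition $A_{2,3,\infty}=\mathbb{Z}^2*_{\langle b\rangle}B_3$, where $B_3=\langle b,c\mid bcb=cbc\rangle$, viewed as a graph of aspherical spaces), I would produce $G$ as the kernel of an explicit surjection onto a group of order $6$. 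Concretely I would take $\rho\colon A_{2,3,\infty}\to S_3$ given by $a\mapsto e$, $b\mapsto(12)$, $c\mapsto(23)$. This respects the relations, since $[a,b]\mapsto e$ and $bcb,cbc\mapsto(12)(23)(12)=(13)=(23)(12)(23)$, and it is surjective because $(12)$ and $(23)$ generate $S_3$. Hence $K:=\ker\rho$ has index $6$, and everything reduces to proving $K\cong G$. (The assignment $a\mapsto e$ rather than $a\mapsto(12)$ is chosen so that $a$ itself becomes one of the \emph{loop} generators appearing in $X(\Theta)$; the precise choice is pinned down by matching the figure.)

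To identify $K$ with $G$ I would pass to the connected degree-$6$ cover $\widetilde Y\to Y$ corresponding to $K$, so that $\pi_1(\widetilde Y)\cong K$ and $\chi(\widetilde Y)=6\,\chi(Y)=0$, and then match it with $X(\Theta)$. The $1$-cells lift according to $\rho$: because $\rho(a)=e$ the loop $a$ lifts to a loop at each of the six vertices, which should supply the loops attached at the vertices of the vertex and edge graphs; the $b$- and $c$-edges lift to the Cayley graph of $S_3$ with respect to $\{(12),(23)\}$; and the order-$3$ element $\rho(bc)=(12)(23)$ partitions the six vertices into the two cosets of $A_3$, which should index the two $F_4$ vertex groups, with the three transpositions indexing the three $F_3$ edges of $\Theta$. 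The six lifts of the square and the six lifts of the hexagon then have to be reorganized. Since a rigorous match of the cells is delicate, the safe route is to run Reidemeister--Schreier on the presentation of $A_{2,3,\infty}$ with a Schreier transversal consisting of lifts of $S_3$, obtaining a presentation of $K$, and to compare it directly with the graph-of-groups presentation of $G(\Theta)$ as two copies of $F_4$ amalgamated three times over $F_3$, using $\chi=0$ and the rank bookkeeping $4=1+3$ and $3=1+2$ as consistency checks.

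The main obstacle is exactly this identification. The crucial point is that the theta-shaped splitting of $G$ into free groups is \emph{not} the splitting induced by the amalgam structure of $A_{2,3,\infty}$: restricting $\mathbb{Z}^2*_{\langle b\rangle}B_3$ to the normal subgroup $K$ gives, via Bass--Serre theory, a star-shaped tree of groups with three $\mathbb{Z}^2$-vertices, one pure-braid vertex $P_3\cong F_2\times\mathbb{Z}$, and infinite cyclic edge groups, which is visibly neither theta-shaped nor a graph of free groups. Moreover $\widetilde Y$ carries hexagonal $2$-cells whereas $X(\Theta)$ is a square complex, so the desired correspondence is a homotopy equivalence rather than a combinatorial covering, and the triangles in the vertex graphs of $X(\Theta)$ do not appear literally in $\widetilde Y$. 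One therefore cannot simply read off $G(\Theta)$ from the evident decomposition, and must exhibit the alternative theta splitting by hand, tracking the $S_3$-action on the cells of $Y$ to recover the triangle-with-loops vertex graphs, the bigon-with-loops edge graphs, and the underlying graph $\Theta$. Once $K\cong G$ is established, the inclusion $K\hookrightarrow A_{2,3,\infty}$ is the asserted index-$6$ embedding.
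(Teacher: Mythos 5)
Your proposal correctly sets up an index-$6$ subgroup (the map $\rho\colon A_{2,3,\infty}\to S_3$ is well defined and surjective), but the heart of the proposition is the identification of that subgroup with $G=\pi_1(G(\Theta))$, and this is exactly the step you do not carry out. You name it yourself as ``the main obstacle,'' observe correctly that the splitting induced on the kernel by the visible amalgam $\mathbb{Z}^2*_{\langle b\rangle}B_3$ is a star of groups containing $\mathbb{Z}^2$ and $P_3\cong F_2\times\mathbb{Z}$ vertices rather than a theta-shaped graph of free groups, and then propose only consistency checks ($\chi=0$, rank bookkeeping, a Reidemeister--Schreier presentation ``to compare directly''). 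None of these can establish an isomorphism with $G$, let alone exhibit the required theta splitting; a Reidemeister--Schreier presentation of the kernel has on the order of $13$ generators and $12$ relators, and matching it to the amalgam $F_4*_{F_3}F_4*_{F_3}F_4*_{F_3}$ is precisely the hard work. So as written the argument has a genuine gap at its central step. There is also a subtler issue: your subgroup $\ker\rho$ need not coincide with the paper's. The paper's index-$6$ subgroup is the kernel of a composite $\mathbb{Z}/2$-quotient followed by a $\mathbb{Z}/3$-quotient $k$ defined only on the double cover, with $k$ sending \emph{both} lifts $x,\bar x$ of $x=cb$ to $+1$; under your $\rho$ the two lifts map to inverse $3$-cycles, i.e.\ to $+1$ and $-1$, so a priori you are looking at a different (normal) index-$6$ subgroup, which would need its own identification argument even if it happens to be isomorphic to $G$.

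The paper's route is engineered to make the identification routine rather than delicate, and you may want to compare. It first rewrites the presentation as $\langle a,b,x\mid ab=ba,\ bxb=x^2\rangle$ with $x=cb$ and takes the double cover corresponding to $a,x\mapsto 0$, $b\mapsto 1$ in $\mathbb{Z}/2$; the point is that this double cover visibly decomposes as a graph of \emph{graphs} with a single edge, giving the splitting $F_2*_{F_3}F_2$ with explicit inclusions of the edge group. Once that intermediate splitting is in hand, the second step is pure Bass--Serre bookkeeping: the homomorphism $k$ to $\mathbb{Z}/3$ is surjective on each vertex group $F_2$ (so there is one vertex above each, with group $F_4$ by the Nielsen--Schreier rank formula) and trivial on the amalgamating $F_3$ (so the single edge lifts to three edges, each with group $F_3$), which forces the quotient graph to be $\Theta$ and reproduces $X(\Theta)$ exactly. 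Your single-step approach skips the intermediate splitting that makes this computation possible, which is why you end up facing the identification ``by hand.''
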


\begin{proof}
First we rewrite the standard presentation of $A_{2,3,\infty}$.
\[
A_{2,3,\infty} = \langle a,b,x \mid ab = ba, bxb = x^2\rangle.
\]
Indeed, by setting $x = cb$, the relation $bxb = x^2$ is equivalent to $bcb = cbc$. The presentation complex $Y$ of the new presentation of $A_{2,3,\infty}$ is on the left side of Figure~\ref{fig: double cover}.
\begin{figure}
    \centering
    \includegraphics{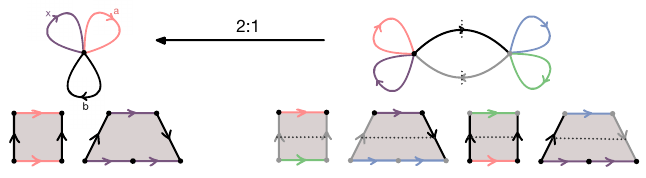}
    \caption{On the left: a presentation complex of $A_{2,3,\infty}$. On the right: its double cover whose fundamental group is the kernel of a homomorphism to $\mathbb Z/2$.}
    \label{fig: double cover}
\end{figure}
Consider the homomorphism $h: A_{2,3,\infty}\to \mathbb Z/2$ sending $a,x$ to $0$, and $b$ to $1$. The kernel $\ker h$ is an index $2$ subgroup of $A_{2,3,\infty}$ whose presentation complex $\hat Y$ is the double cover of $Y$ illustrated on the right side of Figure~\ref{fig: double cover}.

The complex $\hat Y$ can be realized as the total space of the graph of spaces as follows. 
The underlying graph is a single edge. The vertex spaces are (1) the wedge of the pink and purple loops, denoted by $Y_{v_1}$, and (2) the wedge of the green and blue loops, denoted by $Y_{v_2}$. The edge space $Y_e$ is the dotted horizontal graph on the right side of Figure~\ref{fig: double cover}, which is homeomorphic to a bigon with a loop at each vertex.
Indeed, the complex $\hat Y$ is obtained as  $(Y_{v_1} \cup Y_e\times [0,1]\cup Y_{v_2})/\sim$ where $(y,0)\sim f_e^{v_1}(y)$ and $(y,1)\sim f_e^{v_2}(y)$ for $y\in Y_e$, with the gluing maps $f_e^{v_1}, f_e^{v_2}$ illustrated in Figure~\ref{fig: amalgamated product}.
\begin{figure}
    \centering
    \includegraphics{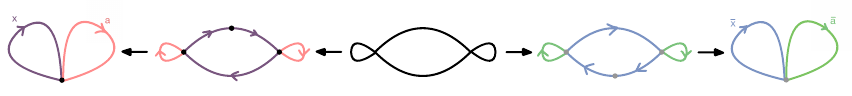}
    \caption{The group $\pi_1 \hat Y$ splits as an amalgamated product $F_2*_{F_3}F_2$ with the two maps $F_3\to F_2$ induced by the graph maps above.}
    \label{fig: amalgamated product}
\end{figure}
The black and gray edges of $\hat Y$ (which each map in $Y$ to the loop labelled $b$) are contained in $Y_e\times [0,1]$ as the product of one of the vertices of $Y_e$ with the interval.

Algebraically, this yields a splitting of $\pi_1 \hat Y$ as an amalgamated product $F_2*_{F_3}F_2$.
The amalgamating subgroup $F_3$ maps to the first copy of $F_2 = \langle a,x \rangle$ as $\langle x^3, a, x^{-1}ax\rangle$, and to the second copy of $F_2 = \langle \bar a,\bar x \rangle$ as $\langle \bar x^3, \bar a, \bar x \bar a\bar x^{-1}\rangle$, and the identification of these two $F_3$ subgroups is via an isomorphism sending $x^3 \mapsto \bar x^3, a\mapsto \bar a, x^{-1}ax \mapsto \bar x^{-2} \bar a\bar x^{2}$.

In particular, a homomorphism $k:\pi_1 \hat Y\to \mathbb Z/3$ sending $x,\bar x\to 1$ and $a,\bar a\to 0$ is well-defined, and constant while restricted to the amalgamating subgroup.
The kernel $\ker k$ is an index $3$ subgroup of $\pi_1 \hat Y$ which splits as a graph of groups $G(\Theta)$, and the corresponding triple cover of $\hat Y$ splits as a graph of graphs $X(\Theta)$, illustrated in Figure~\ref{fig: graph of groups}. This proves that the fundamental group $G$ of $G(\Theta)$ embeds as an index $6$ ($=2\cdot 3$) subgroup of $A_{2,3,\infty}$.

\end{proof}

By \cite{HJP} or \cite{HaettelArtin} $A_{2,3,\infty}$ is not virtually cocompactly cubulated, so in particular not virtually cocompact special. The group $G$, as a finite index subgroup of $A_{2,3,\infty}$ by Proposition~\ref{prop: finite index subgroup}, has the same properties.
\begin{cor}
    The group $G$ is not virtually cocompactly cubulated, so in particular not virtually compact special.
\end{cor}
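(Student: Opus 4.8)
The plan is to deduce the corollary directly from Proposition~\ref{prop: finite index subgroup} together with the cited results on $A_{2,3,\infty}$, using the fact that being virtually cocompactly cubulated is a commensurability invariant. First I would record this invariance. The downward direction is immediate: if a group acts properly and cocompactly on a CAT(0) cube complex, then so does any finite index subgroup, by restricting the action. For the direction I actually need, suppose toward a contradiction that $G$ were virtually cocompactly cubulated; then some finite index subgroup $G_0 \leq G$ acts properly and cocompactly on a CAT(0) cube complex. Since $G$ has index $6$ in $A_{2,3,\infty}$ by Proposition~\ref{prop: finite index subgroup}, the subgroup $G_0$ also has finite index in $A_{2,3,\infty}$, and hence $A_{2,3,\infty}$ would itself be virtually cocompactly cubulated.

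Next I would invoke the cited results: by \cite{HJP} (or alternatively \cite{HaettelArtin}), the Artin group $A_{2,3,\infty}$ is not virtually cocompactly cubulated. Reading the previous paragraph in contrapositive form, this immediately yields that $G$ is not virtually cocompactly cubulated. The ``in particular'' clause then follows from the standard implication that virtual compact specialness is stronger than virtual cocompact cubulation: a compact special cube complex is in particular a compact CAT(0) cube complex on which the fundamental group acts properly and cocompactly, so any virtually compact special group is virtually cocompactly cubulated. Contrapositively, since $G$ is not virtually cocompactly cubulated, it cannot be virtually compact special either.

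The argument is essentially a commensurability transfer, so I do not expect any serious obstacle. The only point requiring a moment's care is the upward transfer of cocompact cubulation along the finite-index embedding $G \hookrightarrow A_{2,3,\infty}$: one must observe that a cocompactly cubulated finite index subgroup of $G$ remains finite index in $A_{2,3,\infty}$, which is automatic since a composition of finite-index inclusions has finite index. Everything else reduces to the cited non-cubulability of $A_{2,3,\infty}$ and the routine fact that compact specialness implies cocompact cubulation.
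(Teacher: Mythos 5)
Your proposal is correct and matches the paper's argument: the paper likewise cites \cite{HJP}/\cite{HaettelArtin} for the non-cubulability of $A_{2,3,\infty}$ and transfers the property to $G$ via the finite-index embedding of Proposition~\ref{prop: finite index subgroup}. You simply spell out the commensurability-invariance step that the paper leaves implicit.
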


\section{An infinite family of 
examples}
Any finite index subgroup of the group $G$ constructed in the previous section splits as a geometrically clean graph of finite rank free groups, and is not virtually compact special, hence not virtually $\VH$-clean.

More examples arise from Artin groups $A_{2,n,\infty}$ for odd $n\geq 3$. These
are also not virtually compact special \cite{HJP}. 
The construction presented in the previous section generalizes to all Artin group $A_{2,n,\infty}$ for odd $n\geq 3$, and yields an index $2n$ subgroup of $A_{2,n,\infty}$ which is a geometrically clean graph of free groups. The underlying graphs is  a $\Theta_n$-graph (i.e.\ a graph on two vertices with $n$ edges joining the two vertices) of groups, with vertex groups isomorphic to $F_{n+1}$, and all $n$ edge groups isomorphic to $F_3$. See Figure~\ref{fig: n=7} for the resulting graph of groups where $n=7$.
\begin{figure}
    \centering
    \includegraphics{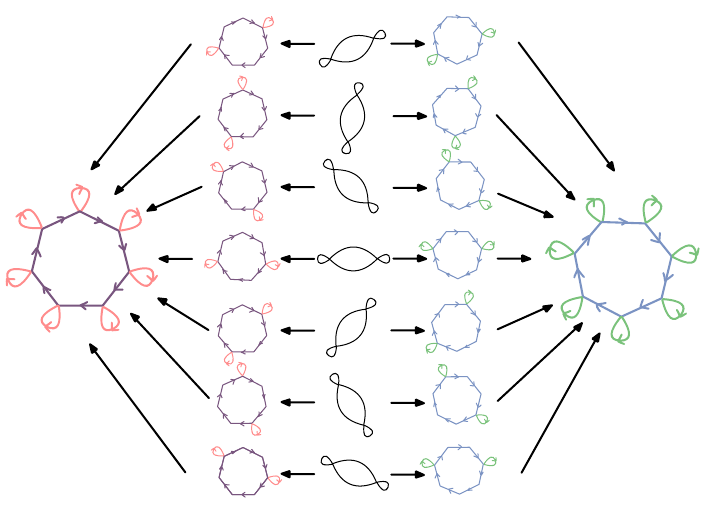}
    \caption{Geometrically clean graph of free groups, whose vertex group are copies of $F_8$, and edge groups are copies of $F_3$, whose fundamental group embeds as an index $14$ subgroup of the Artin group $A_{2,7,\infty}$.}
    \label{fig: n=7}
\end{figure}

\section*{Acknowledgements} The author thanks Sam Fisher for asking her whether geometrically clean graphs of free groups are special, and Zach Munro for a correction in the earlier version. This material is based upon work supported by the National Science Foundation
under Grants No. DMS-1926686 and DMS-2238198.

\bibliographystyle{alpha}
\bibliography{sample}

\end{document}